\theoremstyle{plain}
\newtheorem{Thm}{Theorem}
\newtheorem{Coro}[Thm]{Corollary}
\newtheorem{Lem}[Thm]{Lemma}
\theoremstyle{definition}
\newtheorem{Def}[Thm]{Definition}
\begin{document}

\title{Topological graph clustering with thin position}

\author{Jesse Johnson}
\address{\hskip-\parindent
        Department of Mathematics \\
        Oklahoma State University \\
        Stillwater, OK 74078 \\
        USA}
\email{jjohnson@math.okstate.edu}

\keywords{Thin position, data mining, graph partitioning}

\thanks{This project was supported by NSF Grant DMS-1006369 and was inspired by the workshop The Geometry of Large Networks at the American Institute of Mathematics in November, 2011.}

\begin{abstract}
A clustering algorithm partitions a set of data points into smaller sets (clusters) such that each subset is more tightly packed than the whole. Many approaches to clustering translate the vector data into a graph with edges reflecting a distance or similarity metric on the points, then look for highly connected subgraphs. We introduce such an algorithm based on ideas borrowed from the topological notion of thin position for knots and 3-dimensional manifolds.
\end{abstract}

\maketitle

\section{Introduction}

Data mining is the search for patterns and structure in large sets of (often high dimensional) data. This data is generally in the form of vectors, which one thinks of as points sampled from some underlying probability measure. This probability measure may be a sum of probability measures corresponding to different types of points, in which case one expects the different types of points to form geometrically distinguishable \textit{clusters}.

Clustering algorithms fall into a number of categories determined by the assumptions they make about the underlying probability measures and their approach to searching for clusters. (A good introduction is Everitt's book~\cite{clusters}.) The $K$-means algorithm~\cite{kmeans} assumes that the underlying measures are Gaussian distributions centered at $K$ points throughout a Euclidean space. This has been generalized in a number of ways, but all assume fairly restrictive models and are less effective for high dimensional data where the Euclidean metric is less meaningful.

Hierarchical clustering algorithms~\cite{clink, slink} arrange the data points by building a tree, placing each point in the tree based on its relation to the previously added nodes. This allows much more flexibility of the model and the metric, but for many of these algorithms, the final structure is dependent on the order in which the tree is constructed. 

Graph partitioning algorithms translate the data points into a graph with weighted edges in which the weights reflect the similarity between points in whatever metric is most natural for the given type of data. The clusters are defined by subgraphs that can be separated from the whole graph by removing relatively few edges. Graph clusters should come very close to realizing the Cheeger constant for the graph and there are algorithms for finding them based on linear programming~\cite{linear} as well as spectral analysis of the Laplacian of the adjacency matrix~\cite{spectral}. Carlsson and Memoli~\cite{carlmem} recently introduced a hierarchical cluster method based on encoding the data as a simplicial complex (a generalization of a graph), giving the algorithm a very strong grounding in topology.

In the present paper we define another topological approach to graph clustering inspired by the idea of \textit{thin position} for knots and 3-manifolds~\cite{gabai, st:thin}. In the context of 3-manifolds, thin position determines minimal genus Heegaard splittings, which are related to minimal surfaces~\cite{pittsrub} and the Cheeger constant~\cite{lackenby}. As we will show, thin position translates quite naturally to graph partitioning/clustering.

The resulting algorithm is gradient-like in the sense that it begins with an ordering of the vertices of the graph, defines a ``width'' of the ordering, then looks for ways to find ``thinner'' orderings. Once there are no more possible improvements, there is a simple criteria that decides if the first $k$ vertices should be considered a cluster for any $k < N$. As with any gradient method, there is the potential to get caught in a local minimum if one starts with a bad initial ordering. However, we show in Lemma~\ref{effective1lem} that for any pinch cluster (defined below), there is some initial ordering that will guarantee the algorithm finds it. Thus one would expect to improve the performance of the algorithm by running it repeatedly with different random initial orderings. Its performance on actual data sets will be examined in future papers.

We define what we will mean by a cluster in Section~\ref{defsect}. The reordering portion of the algorithm is described in Section~\ref{orderingsect} and the interpretation of the final ordering to find clusters is described in Section~\ref{clustersect}. Section~\ref{effectivesect} contains the proof of Lemma~\ref{effective1lem}.

\section{Pinch Clusters}
\label{defsect}

Let $G = (V, E)$ be a graph, where $V$ is the set of vertices and $E$ the set of (weighted or unweighted) edges. We will assume throughout the paper that each edge has distinct endpoints. For $A \subset V$ a vertex subset, the \textit{boundary} $\partial A \subset E$ is the set of edges with one endpoint in $A$ and the other endpoint in the complement $V \setminus A$. The \textit{size} of the boundary, $|\partial A|$, is the sum of the weights of these edges. (For a graph with unweighted edges, $|\partial A|$ is the number of such edges.) There is no universally agreed upon definition of a cluster, but roughly speaking one would want it to have a relatively small boundary relative to the number/weights of the edges that do not cross its boundary. For the purposes of this paper, we use the following definition:

\begin{Def}
A \textit{pinch cluster} is a set of vertices $A \subset V$ with the property that for any sequence of vertices $w_1,\dots,w_m$, if adding $w_1,\dots,w_m$ to $A$ or removing $w_1,\dots,w_m$ from $A$ creates a set with smaller boundary then for some $k < m$, adding/removing $w_1,\dots,w_k$ to/from $A$ creates a set with strictly larger boundary. 
\end{Def}

In other words, if you add a sequence of vertices to $A$ or remove  a sequence of vertices from $A$ one at a time, the boundary size must increase before it decreases. This definition is complicated by the fact that adding or removing a vertex may keep the boundary the same size. Adding/removing a single vertex to/from a pinch cluster $A$ cannot strictly decrease the size of its boundary, so a pinch cluster will satisfy the following two conditions:

\begin{enumerate}
\item For every vertex $v \in A$, the sum of the edge weights from $v$ to other vertices in $A$ is greater than or equal to the sum of the edge weights from $v$ to vertices outside $A$.
\item For every vertex $v \notin A$, the sum of the edge weights from $v$ to other vertices outside of $A$ is greater than or equal to the sum of the edge weights from $v$ to vertices inside $A$.
\end{enumerate}

The vertices inside a pinch cluster are more connected to each other than to vertices outside the pinch cluster, while vertices outside are more connected to each other than to the vertices inside. This is illustrated in Figure~\ref{longneckfig}, where we can cut the graph roughly in half in a number of places, each time by cutting three edges. The vertices to the left of the middle cut, for example, form a pinch cluster because while we can add or remove two vertices without increasing its boundary, if we add/remove any further vertices, the boundary increases.
\begin{figure}[htb]
  \begin{center}
  \includegraphics[width=3in]{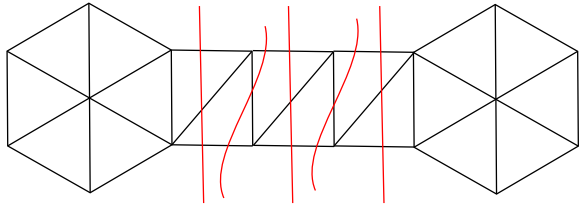}
  \caption{Two distinct clusters with ambiguous points in between.}
  \label{longneckfig}
  \end{center}
\end{figure}

\section{Orderings}
\label{orderingsect}

Let $N = |V|$ be the number of vertices in the graph $G$. An \textit{ordering} of $V$ is a bijection $o : V \rightarrow [1,N]$ where $[1,N]$ represents the integers from $1$ to $N$ (rather than the interval of real numbers). In cases where an ordering is understood, we will write $v_i = o^{-1}(i)$ so that $V = \{v_1,\dots,v_N\}$. For each $i$, let $A_i = \{v_1,\dots,v_i\} \subset V$. The \textit{width at level $i$} is the size of the boudary $|\partial A_i|$, or equivalently the sum of the weights of the edges between all vertices $v_j$, $v_k$ with $j \leq i < k$.

Our goal will be to find an ordering such that some $A_i$ is a pinch cluster.  For such a set $A_i$, the width at level $i$ will be smaller than the widths at nearby values of $i$, or in other words $i$ will be a local minimum with respect to width. The main insight offered by thin position is that to find local minima that are pinch clusters, we must minimize the widths of all the sets $A_i$, particularly the local maxima.

Let $b_i = |\partial A_i|$ for each $i$. The \textit{width} of the ordering $o$ is the vector $w(o) = (w_0,\dots,w_N)$ where the values $w_i$ consist of the values $b_i$, rearranged into non-increasing order. We will compare the widths of different orderings using lexicographic (dictionary) ordering: Given width vectors $w = (w_i)$ and $u = (u_i)$ (with non-increasing components) say $w < u$ if there is a value $i$ such that $w_i < u_i$ while $w_j = u_j$ for every $j < i$. In other words, we compare the entries of the vectors, starting from the first, until we find a component where they disagree. Then we will decide which is smaller based on this component.

Figure~\ref{thinex1fig} shows two orderings on the same graph and the induced widths. The ordering on the left has $w = (4,3,3,2,2)$, while the ordering on the right has $w = (3,2,2,2,2)$. Thus the ordering on the right is thinner. Note that we can compare vectors of different lengths by appending zeros to the end of the shorter vector.
\begin{figure}[htb]
  \begin{center}
  \includegraphics[width=5in]{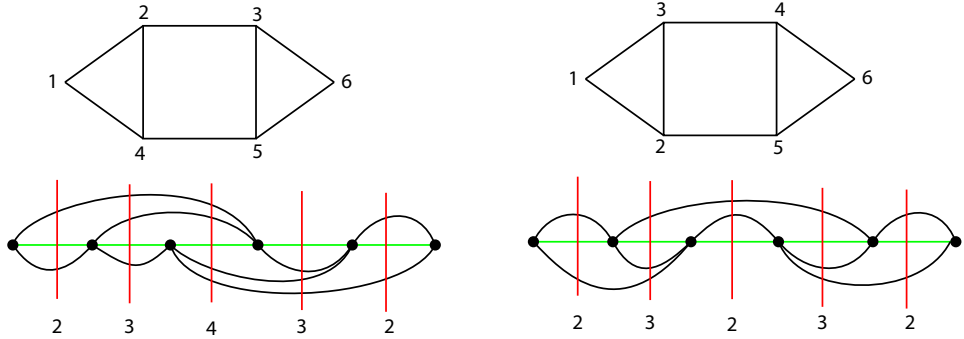}
  \caption{Two different ordering of the vertices.}
  \label{thinex1fig}
  \end{center}
\end{figure}

We will say that an ordering $o$ is \textit{(globally) thin} if for every other ordering $o'$ on $G$, we have $w(o) \leq w(o')$. It turns out that in a globally thin ordering, each value $i$ at which the width is locally minimal defines a pinch cluster. Finding a globally thin ordering would be computationally very expensive (probably NP hard) but luckily we will see that the local minima will still define pinch clusters if the ordering has a closely related property which we will define below.

A \textit{flat} in an ordering $o$ is an interval $F = [i,j]$ such $b_i = b_{i+1} = \cdots = b_j$ but the widths at $i-1$ and $j+1$ are different from the widths $b_i = b_j$. Note that a flat may consist of a single element $F = [i,i]$. A flat is \textit{(locally) maxmimal} if the widths of $o$ at $i-1$ and $j+1$ are both lower than the width at any level in $F$. Similarly, a flat is \textit{(locally) minimal} if the widths of $o$ at $i-1$ and $j+1$ are both greater than the width at any level in $F$. We will say that $i \in [1,N]$ is a \textit{(local) minimum/maximum} if it is contained in a locally minimal/maximal (respectively) flat.

The \textit{slope} of a vertex $v \in V$ with respect to subset $A \subset V$, written $s_A(v)$, is the sum of the edge weights from $v$ to vertices in $V \setminus A$ minus the sum of the edge weights to vertices in $A$. If $v \notin A$ then this is the amount that $|\partial A|$ will increase if we add $v$ into the set. If $v \in A$, this is the amount $|\partial A|$ will decrease if we remove it from $A$.  We get from $A_{i-1}$ to $A_i$ by adding $v_i$ so $|\partial A_i| = |\partial A_{i-1}| + s_{A_i}(v_i)$ and we will abbreviate $s_{i,j} = s_{A_i}(v_j)$. (Note that $s_{i,i} = s_{i-1,i}$.) Given $b_i = |\partial A_i|$ as above, we have:
$$b_i = b_{i-1} + s_{i,i}$$

Let $(a_{k,\ell})$ be the adjacency matrix for $G$, i.e.\ $a_{k,\ell}$ is the weight of the edge from $v_k$ to $v_\ell$, (1 for unweighted edges) or zero if there is no edge. Note that we can calculate all the slopes $\{s_{i,j}\}$ in $O(N^2)$ time by calculating the row sums of the adjacency matrix (the index of each vertex), then going through each row and subtracting $2a_{k,\ell}$ from the row sum for each $\ell$. In particular, $s_{i,j}$ is non-increasing for fixed $j$.

\begin{Def}
Let $F = [i,j]$ be a locally maximal flat in an ordering $o$. Let $[i^-,j^-]$ and $[i^+, j^+]$ be the unique locally minimal flats with $j^- < i$, $j < i^+$ such that there are no minima strictly between $j^-$ and $i^+$. Note that this implies $[i,j]$ is the only locally maximal flat in this interval. We will say that $i$ is \textit{weakly reducible} if for some value $k$ with $j^- < k \leq i^+$, one of the following conditions hold:
 \begin{enumerate}
\item For $k \leq i$, $s_{i,k} > 0$ and $s_{i,k} - s_{i,i+1} - 2a_{k,i+1} > 0$ or 
\item For $k > j$, $s_{j,k} < 0$ and $-s_{j,k} + s_{j,j} - 2a_{k,j} > 0$.
\end{enumerate}
A maximum that is not weakly reducible is called \textit{strongly irreducible}. \end{Def}

We will show that given an ordering with a weakly reducible maximum, we can find an ordering with lower complexity as follows: A permutation $\sigma$ of $[0,N]$ is a \textit{shift} if it is of the form $(j\ i\ (i+1)\ \cdots\ j-1)$ or $((i+1)\ (i+2)\ \cdots\ j\ i)$ for $i < j$. In other words, a shift is cyclic on a set of consecutive integers and moves each integer to the left or right by one. If we compose the ordering $o$ with one of these shifts, we will say that we \textit{shift $j$ to $i$} or \textit{shift $i$ to $j$}, respectively.

\begin{Lem}
\label{strongmaxlem}
If an ordering $o$ has a weakly reducible maximum with $i$,$j$,$k$ as in the definition, then shifting $k$ to $i+1$ (in the case $k \leq i$) or shifting $k$ to $j$ (in the case $k > j$) produces an ordering $o'$ with $w(o') < w(o)$.
\end{Lem}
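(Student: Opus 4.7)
The plan is to prove case (1) in detail and then invoke a mirror argument for case (2). Unpacking the definition of the shift, the only initial segments that change are $A'_m$ for $m \in [k, i]$, where $A'_m = A_{m+1} \setminus \{v_k\}$; outside this range $A'_m = A_m$. Since $v_k \in A_{m+1}$, removing it decreases the boundary size by $s_{A_{m+1}}(v_k)$, so
\begin{equation*}
b'_m \;=\; b_{m+1} - s_{m+1, k}, \qquad k \leq m \leq i.
\end{equation*}
Let $M_F := b_i = b_{i+1} = \cdots = b_j$ denote the common width on the maximal flat.

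The first substantive step is to verify $b'_i < M_F$. Substituting the identities $s_{i+1, i+1} = s_{i, i+1}$ and $s_{i+1, k} = s_{i, k} - 2 a_{k, i+1}$ into the above formula, together with $b_{i+1} = b_i + s_{i+1, i+1}$, gives
\begin{equation*}
b'_i \;=\; b_i \;-\; \bigl(s_{i, k} - s_{i, i+1} - 2 a_{k, i+1}\bigr),
\end{equation*}
and weak reducibility condition (1) makes the parenthesized quantity strictly positive.

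Next I handle the other changed levels $k \leq m < i$. Because $s_{m, k}$ is non-increasing in $m$ for fixed $k$ and condition (1) asserts $s_{i, k} > 0$, we have $s_{m+1, k} \geq s_{i, k} > 0$, hence $b'_m < b_{m+1}$. Moreover $m + 1 \in [k+1, i] \subseteq [j^-, i^+]$, and by hypothesis $[i, j]$ is the unique locally maximal flat in $[j^-, i^+]$, so $b_{m+1} \leq M_F$; therefore $b'_m < M_F$.

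Finally I read off the lexicographic comparison. Outside $[k, i]$ the widths of $o'$ and $o$ coincide; inside $[k, i]$ every new width is strictly less than $M_F$, whereas the old multiset contributes at least one copy of $M_F$ (from position $i$). Thus for every value $v > M_F$ the two multisets have equal counts, while at $v = M_F$ the new count is strictly smaller, which is precisely $w(o') < w(o)$ in the lexicographic order on sorted-decreasing vectors. Case (2) follows by the mirror-image argument: the shift changes only $m \in [j, k-1]$ via $b'_m = b_{m-1} + s_{m-1, k}$; condition (2) gives $b'_j < M_F$ by the analogous algebraic manipulation, and the monotonicity $s_{m-1, k} \leq s_{j, k} < 0$ together with $b_{m-1} \leq M_F$ handles the remaining range. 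I anticipate the main obstacle to be purely bookkeeping—applying the two slope identities in the correct order so that conditions (1) and (2) appear cleanly, and carefully invoking the uniqueness of $[i, j]$ as the maximal flat in $[j^-, i^+]$ to bound the ambient widths by $M_F$.
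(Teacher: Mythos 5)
Your proof is correct and follows essentially the same approach as the paper: both analyze the shift's effect on the widths at levels $k$ through $i$, show that each new width falls strictly below the flat value $M_F$, and conclude via the lexicographic ordering. Your bookkeeping is organized a bit more cleanly---working directly with $b'_m = b_{m+1} - s_{m+1,k}$ rather than the paper's expanded form $b'_\ell = b_\ell - s_{\ell,k} + s_{\ell,\ell+1} + 2a_{k,\ell+1}$---and your lexicographic step is spelled out in more detail, but the underlying argument is the same.
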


\begin{proof}
Without loss of generality assume condition (1) from the definition holds, i.e.\ $k \leq i$. We will shift $k$ to $i+1$. For each $\ell$ with $k \leq \ell \leq i$, the shift has the effect of removing $v_k$ from $A_\ell$ and adding the vertex $v_{\ell+1}$. Removing $v_k$ decreases the boundary of $A_\ell$ by $s_{\ell,k}$. It also increases the slope of $v_{\ell+1}$ with respect to level $\ell$ by $2a_{k,\ell+1}$. Thus adding $v_{\ell+1}$ to $A_i$ increases the boundary of the set by $s_{\ell, \ell+1} + 2a_{k,\ell+1}$. If $A'_\ell$ is the set after the shift and $b'_\ell$ is its width, we have
$$b'_\ell = b_\ell - s_{\ell,k} + s_{\ell, \ell+1} + 2a_{k,\ell+1}$$
For $\ell = i$, this is $b'_i = b_i - s_{i,k} + s_{i, i+1} + 2a_{k,i+1}$. By assumption, $s_{i,k} - s_{i, i+1} - 2a_{k,i+1} > 0$ so we conclude that $b'_i < b_i$. 

For the remaining values of $\ell$, we do not have such a condition, so the boundary of $A_k$ could potentially increase. However, we have $b_i = b_{\ell} + s_{\ell,\ell+1} + \cdots + s_{i-1,i}$. Because there are no maxima between $k$ and $i$, all these slopes are nonnegative, so
$$b_{\ell} + s_{\ell,\ell+1} \leq b_i$$
Substituting this into the above formula, we find
$$b'_\ell \leq b_i - s_{\ell,k} + 2a_{k,\ell+1}$$
The slopes satisfy the formula $s_{i,k} = s_{l,k} - 2a_{k,\ell+1} - \cdots - 2a_{k,i}$. Since the adjacency matrix is non-negative, this implies
$$s_{i,k} \leq s_{l,k} - 2a_{k,\ell+1}$$
Negating both sides and substituting into the previous formula, we find
$$b'_\ell \leq b_i - s_{i,k}$$
By assumption, $s_{i,k} > 0$ so $b'_\ell < b_i$. Thus while the maximum value of $b_\ell$ for $k \leq \ell \leq i$ is $b_i$, the maximum value for $b'_i$ in this range is strictly smaller. Because of the lexicographic ordering, this implies $w(o') < w(o)$.
\end{proof}

\section{Finding clusters}
\label{clustersect}

If all the maxima in an ordering $o$ are strongly irreducible then there may be further shifts that reduce the width by rearranging the vertices within each interval between consecutive minima and maxima. However, such shifts do not seem to be helpful for finding pinch clusters, so we will not seek to minimize the width beyond applying Lemma~\ref{strongmaxlem}. Because there are finitely many possible widths and each shift strictly reduces the width, we are guaranteed to find an ordering that cannot be further reduced, i.e.\ one in which all the maxima are strongly irreducible. Such an ordering will be called \textit{strongly irreducible}. The significance of this definition comes from the following:

\begin{Thm}
\label{thinlevelthm}
If $k$ is a local minimum of a strongly irreducible ordering $o$ for $G$ then $A_k$ is a pinch cluster.
\end{Thm}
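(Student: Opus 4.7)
The plan is to prove the contrapositive: if $A_k$ fails the pinch cluster condition then $o$ has a weakly reducible maximum, so by Lemma~\ref{strongmaxlem} $o$ cannot be strongly irreducible. Suppose then that $A_k$ is not a pinch cluster, witnessed by a sequence of additions or removals; by the symmetry obtained from reversing $o$ and swapping $A_k$ with $V\setminus A_k$ (which preserves widths, the max/min structure, and strong irreducibility), I may assume the sequence is one of additions $w_1,\dots,w_m$. Writing $C_t = A_k \cup \{w_1,\dots,w_t\}$, the hypothesis is that $|\partial C_t|\le b_k$ for $0\le t\le m$ with strict inequality at $t=m$. Choose $m$ minimal; then no proper prefix already drops below $b_k$, so $|\partial C_t|=b_k$ for every $0\le t<m$ and $s_{C_{m-1}}(w_m)<0$.

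Next I locate the critical maximum. Let $[i_{\min},j_{\min}]$ be the minimum flat of $o$ containing $k$ and $[i,j]$ the first locally maximal flat to its right in $o$, with $[i^+,j^+]$ the minimum immediately after $[i,j]$. (If no maximum exists to the right of $k$, the widths are nonincreasing past $k$, and combined with $k$ being a local minimum this forces $A_k=V$ and the pinch condition is vacuous; the analogous case on the left is symmetric.) My aim is to select a vertex $v_p\in\{w_1,\dots,w_m\}$ whose position in $o$ lies in $(j,i^+]$ and to verify condition~(2) of weak reducibility at $[i,j]$ for $v_p$. The first inequality $s_{j,p}<0$ follows from the identity
\[
s_{j,p} \;=\; s_{A_k}(v_p) \;-\; 2\,w\bigl(v_p,\,A_j\setminus A_k\bigr),
\]
coupled with the stated monotonicity that $s_{\cdot,p}$ is nonincreasing in its first argument and with the telescoping $\sum_{t=1}^{m}s_{C_{t-1}}(w_t)=|\partial C_m|-b_k<0$, which forces at least one $v_p$ in the sequence to acquire a strictly negative slope at level $j$. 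The second inequality $-s_{j,p}+s_{j,j}-2a_{p,j}>0$ is extracted by balancing the ascent $b_j-b_k=\sum_{\ell=k+1}^{j}s_{\ell,\ell}$ (nonnegative because no maximum lies strictly between $k$ and $i$) against the flatness equalities $|\partial C_t|=b_k$; this balancing is precisely the mirror of the ``$b'_i<b_i$'' estimate carried out in the proof of Lemma~\ref{strongmaxlem}. Once condition~(2) is verified, Lemma~\ref{strongmaxlem} produces an ordering $o'$ with $w(o')<w(o)$, contradicting strong irreducibility of $o$.

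The main obstacle is the combinatorial selection of $v_p$ in the interval $(j,i^+]$. If the witnessing sequence is entirely contained in $\{v_{k+1},\dots,v_j\}$, so no vertex crosses the maximum, I will pivot to condition~(1) at $[i,j]$, which is the structural mirror of the above argument: now removing such a $v_p$ from $A_i$ plays the role previously taken by adding $v_p$ from outside, and the same telescoping delivers the required inequalities. If instead the sequence spreads across $[i^+,j^+]$ into later flats, I plan to induct on the number of maxima crossed by the sequence: truncate $w_1,\dots,w_m$ at the first vertex whose position exceeds $i^+$, and use minimality of $m$ together with the equalities $|\partial C_t|=b_k$ to show that either the truncation itself drops strictly inside the interval $(j,i^+]$ (reducing to the single-maximum case) or that the excised tail witnesses a shorter analogous failure whose relevant maximum lies further to the right, to which the inductive hypothesis applies.
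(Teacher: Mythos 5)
Your high-level plan---argue the contrapositive, reduce by symmetry to one of the pinch concave/convex cases, locate a critical maximum, and exhibit weak reducibility so that Lemma~\ref{strongmaxlem} contradicts strong irreducibility---is the paper's strategy, with the cosmetic difference that you take the addition (pinch convex) case where the paper takes the removal (pinch concave) case; these are dual under reversal of the ordering and either choice works. But the heart of your argument, the ``select $v_p$ from the witnessing sequence'' step, goes off track and does not close.

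The paper's key simplification, which you miss, is to look \emph{only at the first vertex of the witnessing sequence}. In the removal version: if $v_{j_1},\dots,v_{j_m}$ witnesses failure of pinch concavity at $A_k$, then removing $v_{j_1}$ alone cannot increase the boundary, so $s_{k,j_1}\geq 0$; since $s_{\,\cdot\,,j_1}$ is nonincreasing in its first argument, this propagates to $s_{i,j_1}\geq 0$ at the maximum $i$, and combined with $s_{i,i+1}<0$ it delivers condition~(1). The positions of $v_{j_2},\dots,v_{j_m}$ are entirely irrelevant, so there is nothing to track as the sequence ``spreads across later flats'' and no induction is required. Your argument instead tracks the whole sequence, and that is where it breaks: the telescoping identity $\sum_{t=1}^m s_{C_{t-1}}(w_t)=|\partial C_m|-b_k<0$ only tells you that $w_m$ has negative slope relative to $C_{m-1}$, which is not comparable to $A_j$ (neither set contains the other in general), so slope monotonicity gives you nothing and the claimed conclusion ``at least one $v_p$ acquires strictly negative slope at level $j$'' does not follow as stated. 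The ``balancing the ascent'' step is similarly left as an assertion, and the induction over maxima crossed is a considerable complication the paper avoids entirely.

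You are also missing the degenerate cases that the paper handles with care. When the slope of the first vertex at the minimum level is exactly zero, the strict inequalities in the weak-reducibility definition cannot be verified directly: the paper treats the case $j_1=k$ by restarting at the adjacent level $k-1$, and treats $s_{j_1,j_1}=0$ (which forces no edges from $v_{j_1}$ into the intermediate range) by performing a width-preserving shift of $j_1$ to $k$ and then reducing to the $j_1=k$ case. Without an analogue of this bookkeeping in the addition picture (e.g.\ when $s_{p_1-1,p_1}=0$ where $p_1$ is the position of $w_1$), your inequalities will only come out non-strict and Lemma~\ref{strongmaxlem} cannot be invoked.
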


Note that the complement of a pinch cluster is also a pinch cluster, i.e.\ if $A_k = \{v_1,\ldots,v_k\}$ is a pinch cluster then so is $\{v_{k+1},\ldots, v_N\}$. To simplify the discussion, however, we will only talk about the single pinch cluster $A_k$.

Before proving Theorem~\ref{thinlevelthm}, we introduce the following terminology, which should make the proofs slightly easier to follow.

\begin{Def}
A set $A \subset V$ is \textit{pinch convex} if it has the property that for any sequence of vertices $w_1,\dots,w_m \in V \setminus A$, if adding $\{w_1,\dots,w_m\}$ to $A$ creates a set with smaller boundary then for some $k < m$, adding $\{w_1,\dots,w_k\}$ to $A$ must create a set with strictly larger boundary.

A set $A \subset V$ is \textit{pinch concave} if it has the property that for any sequence of vertices $w_1,\dots,w_m \in A$, if removing $\{w_1,\dots,w_m\}$ from $A$ creates a set with smaller boundary then for some $k < m$, removing $\{w_1,\dots,w_k\}$ from $A$ creates a set with strictly larger boundary.
\end{Def}

By definition, a vertex subset is a pinch cluster if and only if it is both pinch convex and pinch concave. Note that the complement of a pinch convex set is pinch concave and vice versa.  To remember which is which, note that a convex polygon has the property that you cannot add to it without increasing its boundary. The complement of a convex polygon (which we can consider concave) has the property that you cannot remove a portion of it without increasing its boundary.

\begin{proof}[Proof of Theorem~\ref{thinlevelthm}]
Let $k$ be in a local minimum for a strongly irreducible ordering $o$ and assume for contradiction that $A_k$ is not a pinch cluster, i.e.\ $A_k$ fails to be either pinch concave or pinch convex. If $A_k$ fails to be pinch convex then its complement $V \setminus A_k$ fails to be pinch concave. In this case, by reversing the order of $o$ and replacing $A_k$ with $V \setminus A_k$ we can, without loss of generality, consider the case when $A_k$ fails to be pinch concave.

Let $v_{j_1},\dots,v_{j_m}$ be a sequence of vertices in $V$ such that removing the first $m-1$ vertices from $A_k$ does not change the size of the boundary, but removing the final vertex $v_{j_m}$ does reduce the boundary of the set. 

By assumption, $v_{j_\ell} \in A_k$ so for each $\ell \leq m$, $j_\ell \leq k$. Because removing $v_{j_1}$ from $A_k$ does not increase $|\partial A_k|$, we have $s_{k,{j_1}} \geq 0$. Since $k$ is a local minimum, $s_{k,k} \leq 0$. Thus if $j_1 = k$ then $s_{k,k} = 0$, $m > 1$ and $k-1$ is also a minimum so we can restart the argument with $A_{k-1}$ and the sequence of vertices $v_{k_2},\dots, v_{k_m}$.

Otherwise, we have a strict inequality $j_1 < k$. The function $s_{i,j}$ is non-increasing for fixed $j$, so for each $i < k$, $s_{i, j_1} \geq s_{k,j_1}$. In particular, $s_{j_1,j_1} \geq 0$, with equality $s_{j_1,j_1} = 0$ precisely when $s_{k,j_1} = 0$ and there are no edges between $v_{j_1}$ and any of the vertices between $j_1$ and $k$.

In the case when $s_{j_1,j_1} = 0$, because there are no edges between $v_{j_1}$ and any vertex between $j_1$ and $k$, shifting $j_1$ to $k$ will not increase the width of the ordering or introduce any new minimal flat (though it may add a vertex to the existing minimal flat). Thus if we perform the shift, we will have a new ordering with the same $A_k$ and $k$ will still be a minimum. Moreover, this puts us in the above case when $j_1 = k$, so we can proceed as above.

By reducing $k$ in this way, we must eventually come to the case when $s_{j_1,j_1} > 0$. Since the width increases at step $j_1$, but decreases at step $k > j_1$, there must be a local maximum $i$ such that $j_1 \leq i < k$. Assume $i$ is the largest such maximum. As noted above, 
$s_{k,j_1} = s_{i,j_1} - 2a_{j_1,i+1} - \cdots - 2a_{j_1,k}$ so $s_{i,j_1} - 2a_{j_1, i+1} \geq s_{k,j_1} \geq 0$. Moreover, since $i$ is the last local maximum before the minimum $k$, we have the strict inequality $s_{i,i+1} < 0$, so $s_{i,j_1} - s_{i,i+1} - 2a_{j_1, i+1} > 0$ and we conclude that $o$ is not strongly irreducible. This contradiction implies that every local minimum of $o$ defines a cluster.
\end{proof}

If an ordering $o$ for $G$ has exactly one locally minimal flat $F$ then Theorem~\ref{thinlevelthm} tells us this minimum defines two clusters for each $i \in F$, one consisting of vertices $\{v_1,\dots,v_i\}$ and the other consisting of $\{v_{i+1}, v_N\}$, where the width at level $i$ is a local minimum. (Though the clusters defined by different values of $i$ in the same flat are not significantly different.)

If $o$ has multiple locally minimal flats then again each minimum cuts the graph into two pinch clusters, but a cluster defined by a minimum in one flat will likely be a union of smaller clusters defined by the other flats. We would like to find these smaller pinch clusters by looking ``between'' the minimal flats.

Let $m_1 < \dots < m_k$ be indices in the locally minimal flats of $o$, such that each minimal flat contains exactly one $m_i$. Define $m_0 = 0$ and $m_{k+1} = N$. Let $B_i = \{v_{m_i+1},\dots,v_{m_{i+1}}\}$ for each $i \leq k$. Then $B_0$ and $B_k$ are both pinch clusters by Theorem~\ref{thinlevelthm}. 

For $B_i$ with $i \neq 0,k$, we can calculate the slope of a vertex $v \in B_i$ with respect to $B_i$ from the adjacency matrix for $G$. If all these slopes are negative then we will show below that $B_i$ is a pinch cluster. If one or more of these slopes is positive then $B_i$ will not be pinch concave because removing such a vertex from the set decreases its boundary. If a slope is non-negative, then $B_i$ may or may not be pinch concave.

Assume there is a vertex in $B_i$ with non-negative slope and let $v_0 \in B_i$ be a vertex with maximal slope with respect to $B_i$. Define $B^1_i = B_i \setminus v_0$. We can again calculate the slopes of the vertices with respect to $B^1_i$. If any of these slopes are non-negative, remove another vertex with maximal slope to find a set $B^2_i$. Repeat this process until it terminates, with a set $B^\ell_i$ such that either $B^\ell_i$ is empty or every vertex in $B^\ell_i$ has negative slope with respect to $B^\ell_i$. Define $B'_i = B^\ell_i$.

\begin{Lem}
\label{middleclusterlem}
If the algorithm terminates with a non-empty set $B'_i$ then this set is a pinch cluster.
\end{Lem}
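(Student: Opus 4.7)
My plan is to verify that $B'_i$ satisfies both conditions defining a pinch cluster, namely pinch concave and pinch convex. The pinch concave part is immediate: by construction every $v \in B'_i$ satisfies $s_{B'_i}(v) < 0$, so removing any single vertex strictly increases $|\partial B'_i|$. Hence for any sequence $w_1, \dots, w_m \in B'_i$ whose total removal produces a smaller boundary we must have $m \geq 2$, and the prefix of length $k=1$ already strictly increases the boundary, satisfying pinch concave.

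The first step for pinch convex is to show that $s_{B'_i}(v) \geq 0$ for every $v \in V \setminus B'_i$, by three cases. If $v = u_t$ is a removed vertex, a direct calculation gives
\[
s_{B'_i}(u_t) = s_{B^{t-1}_i}(u_t) + 2\sum_{s > t} a_{u_t, u_s},
\]
which is non-negative because $u_t$ was chosen at time $t$ with maximal non-negative slope. If $v \in V \setminus A_{m_{i+1}}$, then by Theorem~\ref{thinlevelthm} the set $A_{m_{i+1}}$ is a pinch cluster so $s_{A_{m_{i+1}}}(v) \geq 0$, and an edge decomposition yields $s_{B'_i}(v) = s_{A_{m_{i+1}}}(v) + 2e(v, A_{m_i}) + 2e(v, \{u_1, \dots, u_p\}) \geq 0$, writing $e(v, S)$ for the total weight of edges from $v$ to $S$. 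The case $v \in A_{m_i}$ is symmetric, using that $A_{m_i}$ is a pinch cluster and so $s_{A_{m_i}}(v) \leq 0$.

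Now given a sequence $w_1, \dots, w_m \in V \setminus B'_i$ whose cumulative addition to $B'_i$ strictly decreases the boundary, I want $k < m$ with $|\partial(B'_i \cup W_k)| > |\partial B'_i|$. The first addition contributes $s_{B'_i}(w_1) \geq 0$, so if this is strictly positive then $k=1$ works immediately. The difficult case is $s_{B'_i}(w_1) = 0$, where the decompositions above force every non-negative summand in $s_{B'_i}(w_1)$ to vanish, yielding strong structural constraints on $w_1$ (for example, no edges to $\{u_1, \dots, u_p\}$ or to one of $A_{m_i}$ or $V \setminus A_{m_{i+1}}$). A useful auxiliary inequality, which closes off the sub-case where every $w_j$ is a removed vertex, is that for any $U \subseteq \{u_1, \dots, u_p\}$,
\[
|\partial(B'_i \cup U)| - |\partial B'_i| = \sum_{u \in U} s_{B'_i}(u) - 2\, e_{\mathrm{int}}(U) \geq 0,
\]
which follows from the monotonicity bound $s_{B'_i}(u_t) \geq 2\sum_{s > t,\, u_s \in U} a_{u_t, u_s}$ obtained by summing the displayed formula above over $u_t \in U$.

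The main obstacle is the mixed case, where the sequence interleaves removed vertices with vertices from $V \setminus B_i$ and begins with one or more zero-slope additions. I plan to handle this by induction on $m$, peeling off $w_1$ and tracking the slope changes of subsequent $w_j$ via the identity $s_{A \cup \{w_1\}}(v) = s_A(v) - 2a_{w_1, v}$; the structural constraints forced by $s_{B'_i}(w_1) = 0$ limit which edges $a_{w_1, w_j}$ can be nonzero, and the remaining interactions are controlled by the pinch cluster properties of $A_{m_i}$ and $A_{m_{i+1}}$ applied to the tail sequence. The most delicate bookkeeping is around cross-edges between the three types of outside vertices, since the zero-slope constraints do not directly forbid every relevant edge.
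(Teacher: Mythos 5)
Your treatment of pinch concavity is correct and matches the paper exactly: every vertex of $B'_i$ has strictly negative slope, so removing any single vertex strictly increases $|\partial B'_i|$, which settles the concave side.

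For pinch convexity you take a genuinely different route from the paper, and there is a real gap in it. The paper shows $B_i$ is pinch convex as the intersection of two pinch convex sets (Corollary~\ref{convexintersectionlem}), and then shows each $B^{j}_i$ is pinch convex by an inductive argument built around Lemma~\ref{convexsubsetlem}: if a ``slowly decreasing'' sequence added to $B^{j}_i$ eventually drops the boundary, the trap lemma forces the last vertex to be the unique vertex $v_0\in B^{j-1}_i\setminus B^{j}_i$, and then replacing $B^{j}_i\cup\{w_1,\dots,w_m\}$ by $B^{j-1}_i\cup\{w_1,\dots,w_{m-1}\}$ contradicts the pinch convexity already established at the previous stage. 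No edge bookkeeping is needed; everything reduces to the monotonicity $s_{C_\ell}(w_{\ell+1})\leq s_{B_\ell}(w_{\ell+1})$ in Lemma~\ref{convexsubsetlem}.

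Your approach instead aims to prove directly that $s_{B'_i}(v)\geq 0$ for every $v\notin B'_i$ via the three-case edge decomposition (which is correct as far as it goes, and the auxiliary inequality $|\partial(B'_i\cup U)|-|\partial B'_i| = \sum_{u\in U} s_{B'_i}(u) - 2e_{\mathrm{int}}(U) \geq 0$ for $U\subseteq\{u_1,\dots,u_p\}$ is also verified). However, nonnegativity of single-vertex slopes is necessary but not sufficient for pinch convexity: the definition quantifies over \emph{sequences}, and the whole difficulty is controlling sequences whose initial additions all have zero slope and which interleave removed vertices with vertices from $A_{m_i}$ and $V\setminus A_{m_{i+1}}$. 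You explicitly leave this mixed case as a ``plan'' involving induction on $m$, slope-update identities, and ``delicate bookkeeping'' around cross-edges, and you acknowledge that the structural constraints forced by $s_{B'_i}(w_1)=0$ ``do not directly forbid every relevant edge.'' That is precisely the part of the argument that has to be carried out, and as written it is not. The paper's use of Lemma~\ref{convexsubsetlem} and the preservation of pinch convexity under intersection and single-vertex removal is what makes this tractable; without an analogue of that trap lemma your induction has no clear way to close. I'd suggest either proving Lemma~\ref{convexsubsetlem} and Corollary~\ref{convexintersectionlem} and falling back to the structural argument, or supplying the mixed-case induction in full detail, since the nonnegativity of $s_{B'_i}$ on its own does not give the conclusion.
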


Before proving this we need, the following technical Lemma:

\begin{Lem}
\label{convexsubsetlem}
Let $B \subset C$ be vertex sets in which $C$ is pinch convex. If there is a sequence of vertices $w_1,\dots,w_m$ such that adding $w_1,\dots,w_m$ to $B$ increases its boundary but adding any proper subset $w_1,\dots,w_\ell$ ($\ell < m$) does not increase its boundary then $w_m \in C$.
\end{Lem}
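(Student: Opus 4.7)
I will argue by contradiction. Suppose $w_m\notin C$; the plan is then to use the sequence $w_1,\ldots,w_m$ to construct a sequence of vertices in $V\setminus C$ whose addition to $C$ violates pinch convexity. The natural candidate is the subsequence $u_1,\ldots,u_n$ obtained from $w_1,\ldots,w_m$ by discarding the terms lying in $C$; by the contradiction hypothesis $u_n=w_m$. Setting $U_j=\{u_1,\ldots,u_j\}$ and $W_\ell=\{w_1,\ldots,w_\ell\}$, and letting $\ell(j)$ denote the index of $u_j$ in the original sequence, the goal is to prove $|\partial(C\cup U_J)|\le|\partial C|$ for every $J<n$ together with $|\partial(C\cup U_n)|<|\partial C|$, which directly contradicts pinch convexity of $C$.

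The main technical tool will be slope monotonicity: if $A\subseteq A'$ and $v\notin A'$ then $s_{A'}(v)\le s_A(v)$, with the deficit equal to twice the total edge weight from $v$ to $A'\setminus A$. Because $B\subseteq C$ and the terms of $W_{\ell(j)-1}$ outside $C$ are precisely $u_1,\ldots,u_{j-1}$, we have the containment $B\cup W_{\ell(j)-1}\subseteq C\cup U_{j-1}$, and neither set contains $u_j$. Slope monotonicity therefore yields
$$s_{C\cup U_{j-1}}(u_j)\le s_{B\cup W_{\ell(j)-1}}(w_{\ell(j)}),$$
and summing from $j=1$ to any $J\le n$ bounds $|\partial(C\cup U_J)|-|\partial C|$ above by the total of the $B$-side step contributions coming from the $u$-indices.

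The main obstacle is accounting for the intermediate $B$-side steps at which $w_i\in C\setminus B$: these contribute to the $B$-side running sum but are invisible to the $C$-side subsequence. The hypothesis gives $B$-side partial sums that stay $\le 0$ for $\ell<m$ and jump strictly positive at $\ell=m$. I would argue, by applying slope monotonicity inside $C$ (any such $w_i$ sits between $B\cup W_{i-1}$ and $C$), that the cumulative contribution of these $C$-interior steps to the $B$-side running sum is non-negative in the relevant range. Subtracting this non-negative aggregate from the hypothesis inequalities then transforms the $B$-side non-increase for $\ell<m$ into the required $|\partial(C\cup U_J)|\le|\partial C|$ for $J<n$, and transforms the strict $B$-side increase at step $m$ into the strict $C$-side decrease at step $n$. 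The delicate point — and where I expect to need the most care — is verifying that the prefix condition really does survive the deletion of the $C$-interior terms, and that is what produces the contradiction with pinch convexity of $C$.
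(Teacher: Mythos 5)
Your plan follows the same engine the paper uses — slope monotonicity under containment, applied to the sequence transferred from $B$ to $C$ — but the ``delicate point'' you flag at the end is a genuine gap, and the sketch you offer does not close it. The assertion that the cumulative $B$-side contribution of the $C$-interior steps is non-negative does not follow from slope monotonicity: the set $B\cup W_{i-1}$ is \emph{not} contained in $C$ (because $W_{i-1}$ can already contain earlier $u$'s lying outside $C$), so there is no nesting available to control the sign of $s_{B\cup W_{i-1}}(w_i)$ for $w_i\in C\setminus B$. Concretely, a vertex $w_i\in C\setminus B$ that is heavily joined to $B$ has a very negative $B$-side slope, and nothing in the hypotheses forbids this; then your subtraction step gives nothing, and the required inequalities $|\partial(C\cup U_J)|\le|\partial C|$ do not follow. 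There is also a sign confusion inherited from the statement: ``increases'' in the lemma is evidently a typo for ``decreases'' (compare the use of the lemma in Corollary~\ref{convexintersectionlem} and the phrase ``$s_{C_{m-1}}(w_m)$ is negative'' in the paper's proof), yet you describe the $B$-side partial sum as jumping ``strictly positive at $\ell=m$'' and then claim to extract a strict $C$-side \emph{decrease}; as written, that implication is a non sequitur.

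It is worth saying that you have put your finger on a real subtlety that the paper's own proof glosses over. The paper writes $C_\ell = C\cup\{w_1,\dots,w_\ell\}$, compares $s_{C_\ell}(w_{\ell+1})\le s_{B_\ell}(w_{\ell+1})$ term by term, and concludes the $C$-side boundary does not increase for $\ell<m$; but when $w_{\ell+1}\in C_\ell$ the slope $s_{C_\ell}(w_{\ell+1})$ is \emph{not} the change in $|\partial C_{\ell+1}|$ (the true change is zero, which can strictly exceed a negative slope), so the summed bound on $|\partial C_\ell|-|\partial C|$ is not justified. Your explicit passage to the subsequence $u_1,\dots,u_n$ of vertices outside $C$ is the correct way to set up the pinch-convexity contradiction, but then you are squarely facing the bookkeeping problem you named, and you need either a new idea there or a sharpening of the hypotheses (e.g.\ that $|\partial B_\ell|=|\partial B|$ for all $\ell<m$, which is what survives if one takes $m$ minimal) before the chain of inequalities goes through.
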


\begin{proof}
Let $B_\ell = B \cup \{w_1,\dots,w_\ell\}$ and $C_\ell = C \cup \{w_1,\dots,w_\ell\}$ for each $\ell \leq m$. Because $B \subset C$, we have $B_\ell \subset C_\ell$ for each $\ell$, and therefore $s_{C_\ell}(w_{\ell+1}) \leq s_{B_\ell}(w_{\ell+1})$. Thus if we add the first $\ell < m$ vertices to $C$, the boundary will not increase for any $\ell < m$. The final slope $s_{C_{m-1}}(w_m)$ is negative, so if $w_m \notin C$ then adding $w_m$ to $C$ will decrease its boundary. Since we assumed $C$ is pinch concave, we conclude that $w_m$ must already be contained in $C$.
\end{proof}

\begin{Coro}
\label{convexintersectionlem}
The intersection of two pinch convex sets is pinch convex.
\end{Coro}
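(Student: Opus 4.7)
The plan is to invoke Lemma~\ref{convexsubsetlem} twice, once with $C_1$ and once with $C_2$ in the role of the outer pinch convex set, forcing the terminal vertex of a hypothetical obstructing sequence to lie in both $C_1$ and $C_2$.

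Concretely, I would argue by contradiction. Suppose $C := C_1 \cap C_2$ fails to be pinch convex, so there is a sequence of distinct vertices $w_1,\ldots,w_m \in V \setminus C$ such that adding all of them to $C$ strictly decreases the boundary while no proper prefix strictly increases it. Choose such a counterexample with $m$ minimal. Minimality then forces $|\partial C_\ell| = |\partial C|$ for every $\ell < m$: the $\leq$ direction is exactly the ``no proper prefix strictly increases'' hypothesis, and any strict decrease on a proper prefix would itself be a shorter counterexample, since the prefixes of that prefix are among the prefixes of the original sequence. Distinctness of the $w_j$ is harmless, since any repeated vertex yields a vacuous step that can be dropped without affecting the counterexample.

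Now I would apply Lemma~\ref{convexsubsetlem} with $B = C$ and outer set $C_1$. The inclusion $C \subset C_1$ and the pinch convexity of $C_1$ are given, and the minimality reduction above arranges exactly the lemma's hypothesis about the boundary changes of $B$ along the sequence. The lemma therefore concludes $w_m \in C_1$, and the identical argument with $C_2$ in place of $C_1$ yields $w_m \in C_2$; together these give $w_m \in C_1 \cap C_2 = C$, contradicting $w_m \in V \setminus C$. The main obstacle is purely bookkeeping: one has to verify carefully that the minimality reduction produces a sequence whose proper prefixes satisfy the precise one-sided condition on boundary change required by Lemma~\ref{convexsubsetlem}, after which the symmetric double application closes the argument.
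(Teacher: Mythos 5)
Your proof is correct and takes essentially the same approach as the paper: assume the intersection fails to be pinch convex, extract the witnessing sequence, apply Lemma~\ref{convexsubsetlem} once with each of the two pinch convex sets as the outer set, and derive a contradiction from $w_m$ lying in the intersection. The extra minimality reduction is harmless but unnecessary: the negation of the pinch-convexity definition already hands you a sequence where no proper prefix strictly increases the boundary, which is exactly the one-sided hypothesis Lemma~\ref{convexsubsetlem} requires.
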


\begin{proof}
Let $B$ and $C$ be pinch convex sets and assume for contradiction $B \cap C$ is not pinch convex. Then there is a sequence of vertices $w_1,\dots,w_m \notin (B \cap C)$ such that adding $w_1,\dots,w_\ell$ to $(B \cap C)$ does not increase the boundary for $\ell < m$, but adding $w_1,\dots, w_m$ decreases the boundary. Then by Lemma~\ref{convexsubsetlem}, $w_m \in B$ and $w_m \in C$. But by assumption, $w_m \notin B \cap C$. This contradiction completes the proof.
\end{proof}

\begin{proof}[Proof of Lemma~\ref{middleclusterlem}]
By construction, each vertex of $B'_i$ has negative slope with respect to $B'_i$. If we remove any sequence of vertices from $B'_i$, the boundary will increase with the very first vertex. Therefore $B'_i$ is pinch concave and we only need to check that it is also pinch convex. We construct $B'_i$ as the final set in a sequence $B_i, B^1_i,\dots, B^\ell_i = B'_i$ and we will check that each $B^j_i$ is pinch convex. 

The set $B_i$ is the intersection of $A_{m_{i+1}}$ and $V \setminus A_{m_i+1}$, each of which is a pinch cluster and thus pinch convex. So by Corollary~\ref{convexintersectionlem}, $B_i$ is pinch convex. 

Assume for contradiction $B^1_i$ is not pinch convex, so there is a sequence of vertices $w_1,\dots,w_m$ such that adding $w_1,\dots,w_\ell$ to $B^1_i$ does not increase the boundary for $\ell < m$, but adding $w_1,\dots, w_m$ does. Lemma~\ref{convexsubsetlem} implies that the final vertex $w_m$ is in $B_i$, so it must be the unique vertex in $B_i \setminus B^1_i$. 

The slope of each $w_i$ with respect to $B_i$ is less than or equal to its slope with respect to $B^1_i$, so $w_1,\dots,w_{m-1}$ has the property that adding the first $\ell$ of these vertices to $B_i$ does not increase the boundary of the set. By assumption, we have $|\partial(B^1_i \cup \{w_1,\dots,w_m\}| < |\partial B^1_i| \leq |\partial B_i|$ and the set $B^1_i \cup \{w_1,\dots,w_m\}$ is the same as $B_i \cup \{w_1,\dots,w_{m-1}\}$. Thus adding $\{w_1,\dots,w_{m-1}\}$ to $B_i$ reduces its boundary. This contradicts the fact that $B_i$ is pinch convex, so we conclude that $B^1_i$ must be pinch convex. If we repeat the argument for each $B^j_i$, we find that $B'_i$ is pinch convex and therefore a pinch cluster.
\end{proof}

\section{Effectiveness}
\label{effectivesect}

As noted above, the reordering algorithm (like any gradient method) may terminate with a strongly irreducible ordering that does not determine a pinch cluster. (Of course, some graphs will not contain any non-trivial pinch clusters.) We show in this section, however, that the algorithm has the potential to discover any pinch cluster in $G$ if the correct initial ordering is chosen. Thus running the algorithm beginning with a number of different random initial orderings increases one's chances of finding a useful cluster.

We will say that an ordering $o$ on $G$ \textit{discovers} a pinch cluster $A \subset V$ if $A = \{v_1,\dots,v_i\}$ for some local minimum $i$. We show below that for any cluster $A$, there is a strongly irreducible ordering that discoveres $A$. Because the algorithm stops when it finds a strongly irreducible ordering, the algorithm can return $o$, given a well chosen initial ordering.

\begin{Lem}
\label{effective1lem}
If $A \subset V$ is a pinch cluster for $G$ then there is a strongly irreducible ordering for $G$ that discovers $A$.
\end{Lem}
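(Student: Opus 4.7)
The plan is to show that any initial ordering $o_0$ in which the vertices of $A$ occupy the first $n = |A|$ positions can be improved by the algorithm of Lemma~\ref{strongmaxlem} to a strongly irreducible ordering that still has $A_n = A$, with $n$ a local minimum. Once such an ordering is produced, it discovers $A$ by definition.

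Once $A$ is placed as a prefix, level $n$ is already a local minimum: pinch concavity of $A$ applied to the single-vertex sequence $w_1 = v_n$ gives $|\partial(A \setminus v_n)| \geq |\partial A|$, i.e.\ $b_{n-1} \geq b_n$, and pinch convexity applied to $w_1 = v_{n+1}$ similarly gives $b_{n+1} \geq b_n$. So $n$ lies in some locally minimal flat $M = [m_1, m_2]$ with $m_1 \leq n \leq m_2$.

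The technical heart is to check that every shift produced by Lemma~\ref{strongmaxlem} is confined to one side of $M$, and hence preserves $A_n = A$. Any locally maximal flat $[i,j]$ lies entirely in $[1, m_1 - 1]$ or entirely in $[m_2+1, N]$. In the left-hand case, the adjacent minimum to the right is $M$ itself, so the index $k$ of the weakly reducible definition satisfies $j^- < k \leq i^+ = m_1$, and both shift variants ($k$ to $i+1$ and $k$ to $j$) only permute positions in $[1, m_1] \subseteq [1, n]$. The right-hand case is symmetric with positions contained in $[n+1, N]$. I then verify that $n$ remains a local minimum after each such shift: $b_n$ is unaltered because $A_n$ is unaltered, and in the edge cases where the shift reaches position $n-1$ or $n+1$, the new level set equals $A \setminus \{v\}$ for some $v \in A$ or $A \cup \{u\}$ for some $u \in V \setminus A$, so pinch concavity and convexity again give $b'_{n-1} \geq b_n$ and $b'_{n+1} \geq b_n$.

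Because each shift strictly reduces the width lexicographically and only finitely many width vectors occur, the process terminates. Every weakly reducible maximum in the terminal ordering would, by the analysis above, admit an allowed shift; so no such maxima remain, and the terminal ordering is strongly irreducible. Combined with the preserved invariants $A_n = A$ and $n$ being a local minimum, this gives an ordering that discovers $A$. The main obstacle is the case analysis of the confinement step: one must carefully unpack the bound $j^- < k \leq i^+$ in the definition of weakly reducible, use that $M$ plays the role of $[i^+, j^+]$ for left-side maxima and of $[i^-, j^-]$ for right-side maxima, and check both shift variants on each side. Once that is in place, the pinch cluster property controls the boundary behaviour at levels $n \pm 1$ almost automatically.
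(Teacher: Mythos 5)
Your proposal follows the same route as the paper: place $A$ as the prefix $\{v_1,\dots,v_n\}$, argue that level $n$ is a local minimum and that the shifts of Lemma~\ref{strongmaxlem} never move a vertex across level $n$, and then invoke finiteness. The confinement argument you give — unpacking $j^- < k \le i^+$ and tracking which positions a shift touches — is actually more explicit than the paper's one-sentence assertion that shifts stay between consecutive minima, and it is correct (one small adjustment: the adjacent right minimum of a left-side maximum need not be $M$ itself, only some minimum with $i^+ \le m_1$, but the inclusion of touched positions in $[1,n]$ still holds).

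There is, however, a genuine gap in the step where you conclude that $n$ is a local minimum. You derive only $b_{n-1}\ge b_n$ and $b_{n+1}\ge b_n$ from the single-vertex case of pinch concavity/convexity, and then write ``So $n$ lies in some locally minimal flat.'' That inference does not hold under the paper's definitions: a \emph{local minimum} is an index lying in a \emph{locally minimal flat}, and the flat containing $n$ could extend (say, to the left to some $m_1 < n$ with $b_{m_1}=b_n$) while being flanked by a strictly \emph{smaller} value $b_{m_1-1}<b_{m_1}$, in which case that flat is not locally minimal. Concretely, $b_{n-2}=4,\ b_{n-1}=b_n=5,\ b_{n+1}=6$ satisfies both of your single-vertex inequalities but the flat $[n-1,n]$ is not a local minimum. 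To exclude this you need the full multi-vertex pinch cluster property: if the flat containing $n$ were flanked at level $p$ by a smaller value, the vertices from that edge to $p+1$ (or $p$) could be removed from (or added to) $A$ one at a time without ever raising the boundary and eventually lowering it, contradicting pinch concavity (or convexity). This is exactly what the paper's sentence ``Otherwise, the ordering would define a sequence of vertices such that adding or removing them from $A$ would decrease the boundary of the set without increasing it first'' is doing. The same single-vertex shortcut reappears when you re-verify the local minimum after each shift; the fix in each case is to use the sequence version rather than the one-vertex version.
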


\begin{proof}
Choose an ordering $o$ such that $A = v_1,\dots,v_i$ for $i=|A|$. This ordering may not be strongly irreducible, but by repeatedly applying Lemma~\ref{strongmaxlem}, we can find a strongly irreducible ordering $o'$. 

In the initial ordering $o$, $i$ is a local minimum. Otherwise, the ordering would define a sequence of vertices such that adding or removing them from $A$ would decrease the boundary of the set without increasing it first. The algorithm only shifts vertices within the intervals between consecutive local minima and never shifts a vertex into or across a local minimum. Thus the first shift leaves the vertices of $A$ in the first $i$ slots.  As above, $i$ will again be a local minimum for the new ordering, so the second shift also leaves the vertices of $A$ in the first $i$ slots. By repeating this argument for each shift, we find that in the final strongly irreducible ordering, we again have $A = \{v_1,\dots,v_i\}$.
\end{proof}

The algorithm addressed by Lemma~\ref{middleclusterlem}, which finds clusters within the middle blocks of a strongly irreducible ordering, may appear somewhat haphazard compared to the reordering algorithm, but in fact, it is guaranteed to find a pinch cluster if one exists. 

Given a pinch cluster $A \subset V$, let $C(A)$ be the set that results from starting with $A$ and repeatedly removing any vertex whose slope with respect to the remaining set is zero. (Because $A$ is a pinch cluster, none of the slopes can be strictly positive.) We will call $C(A)$ a \textit{core} of $A$. Note that $|\partial C(A)| = |\partial A|$ and because $A$ is a pinch cluster, it must have a non-empty core. (Note that we say ``a core'' rather than ``the core''. By this definition, there may not be two or more cores depending on the order in which the vertices are removed. However, the proof of Lemma~\ref{effective2lem} can be modified to show that any pinch cluster has a unique core.)

\begin{Lem}
\label{effective2lem}
If $B_i$ contains a pinch cluster $C$ then the algorithm will terminate with a non-empty pinch cluster $B'_i$ containing every core of $C$.
\end{Lem}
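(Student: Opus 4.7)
The plan is to prove by induction on $j$ that every core $C(C)$ of the pinch cluster $C$ satisfies $C(C) \subseteq B^j_i$ throughout the sequence $B_i = B^0_i, B^1_i, \dots, B^\ell_i = B'_i$ generated by the algorithm. Once this inclusion is established, non-emptiness of $B'_i$ follows from the already noted fact that cores of pinch clusters are non-empty, and then Lemma~\ref{middleclusterlem} supplies the conclusion that $B'_i$ is a pinch cluster.

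The base case $C(C) \subseteq B_i$ is immediate since $C(C) \subseteq C \subseteq B_i$. For the inductive step, assume $C(C) \subseteq B^j_i$ and that the algorithm has not yet terminated, so it removes a vertex $v_0 \in B^j_i$ of maximal slope with $s_{B^j_i}(v_0) \geq 0$. The main tool is slope monotonicity under inclusion: if $A \subseteq A'$ and $v \in A$, then enlarging the containing set only moves edges from the ``out'' count to the ``in'' count, so $s_{A'}(v) \leq s_A(v)$. Applying this to $C(C) \subseteq B^j_i$, for every $v \in C(C)$ we obtain $s_{B^j_i}(v) \leq s_{C(C)}(v)$.

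It therefore suffices to verify that every vertex of $C(C)$ has \emph{strictly} negative slope with respect to $C(C)$; given this, any $v \in C(C)$ would satisfy $s_{B^j_i}(v) < 0$, contradicting the fact that $v_0$ was chosen as a vertex of non-negative maximal slope. Strict negativity holds because the core construction terminates precisely when no zero-slope vertex remains, and because removing a zero-slope vertex from a pinch cluster cannot introduce a positive slope elsewhere---if $v_0$ had zero slope in a pinch cluster $A$ and some $w \in A \setminus v_0$ acquired positive slope after removing $v_0$, then the two-step sequence $(v_0, w)$ would strictly decrease $|\partial A|$ at the second step while neither prefix strictly increased it, violating pinch concavity of $A$. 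Hence $v_0 \notin C(C)$, so $C(C) \subseteq B^{j+1}_i$, completing the induction.

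The main obstacle is the preliminary structural claim that $C(C)$ is non-empty and has all strictly negative internal slopes; once these properties are in place, the inductive argument combining slope monotonicity with the maximality of $v_0$ is routine, and Lemma~\ref{middleclusterlem} delivers the final pinch-cluster conclusion.
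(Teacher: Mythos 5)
Your proof is essentially the same as the paper's, merely recast: the paper picks the \emph{first} core vertex removed and derives a contradiction, while you run an induction showing the core stays inside $B^j_i$ at every step. Both arguments hinge on exactly the two facts you isolate: slope monotonicity under inclusion of the ambient set, and the strict negativity of every slope inside a core.

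The one place I would push back is your justification of the strict-negativity claim. You argue that removing a \emph{single} zero-slope vertex $v_0$ from a pinch cluster $A$ cannot create a positive slope, via the two-step sequence $(v_0, w)$. That is correct, but the core construction removes a \emph{chain} of zero-slope vertices, and after the first removal $A \setminus v_0$ need not be a pinch cluster, so your one-step argument does not iterate as stated. The fix is to apply pinch concavity of the original $A$ directly to the full sequence: if $v_0, v_1, \dots, v_{r-1}$ are removed in turn, each with slope zero with respect to the current set, and afterwards some $w$ has strictly positive slope, then removing $(v_0, \dots, v_{r-1}, w)$ from $A$ yields a smaller boundary while no proper prefix strictly increases it, contradicting pinch concavity. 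With that adjustment your verification is complete (and indeed is more explicit than the paper, which simply asserts the parenthetical ``none of the slopes can be strictly positive'' when defining the core).
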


\begin{proof}
Let $C = C(A)$ be a core of a pinch cluster contained in $B_i$ and assume for contradiction that some vertex of $C$ is not in the subset $B'_i$ constructed by the algorithm. Since the vertices are removed sequentially, we will let $v$ be the first vertex of $C$ removed from $B^j_i$ by the algorithm, at step $j+1$. So in particular, $C$ is contained in $B^j_i$ and the slope of $v$ with respect to $B^j_i$ is positive. 

Because $C$ is a core of a pinch cluster and $v \in C$, the slope $s_C(v)$ is strictly negative. Because $C \subset B^j_i$, this implies that the slope of $v$ with respect to $B^j_i$ is strictly negative. However, because the algorithm removed $v$ from $B^j_i$, $s_{B^j_i}(v)$ must have been non-negative. This contradiction implies that every vertex of $C$ remains in $B'_i$.
\end{proof}

\bibliographystyle{amsplain}
\bibliography{thingraphs}

\end{document}